\title{Weak mixing for compact Lie extensions of interval exchange transformations}
\author{Dmitri Scheglov}
\theoremstyle{plain}
\newtheorem{Lemma}{Lemma}[section]
\newtheorem{Theorem}{Theorem}[section]
\newtheorem{defn}{Definition}   
\begin{document}

\maketitle
\setlength{\parindent}{0pt}

\begin{abstract}
\noindent
We prove that for any compact connected Lie group $G$ and a typical  interval exchange transformation $T$, not isomorphic to a rotation, the map
 $T_{\phi}:[0,1]\times G\rightarrow [0,1]\times G  $ given by formula $T_{\phi}(x, y) =(Tx, \phi(x)y)$ is weakly mixing, where $\phi: [0,1]\rightarrow G$ is a typical function, constant on each of the intervals. 
\end{abstract}

\section{Introduction}
  Given a map $T: X\rightarrow X$, which preserves a probability measure $\mu$  and a family of maps $S_x:Y\rightarrow Y$ , each preserving a probability measure $\nu$ on the measurable space $Y$, one has a skew product transformation $T\rtimes S_x: X\times Y\rightarrow X\times Y$ defined by formula $T\rtimes S_x(x,y)=( T(x), S_x(y))$ which, if measurable, preserves a measure $\mu\times\nu$.
\

\

If $G$ is a compact topological group with the Haar measure $\nu$ then one can take a measurable function $\phi: X\rightarrow G$ and form a skew product $T_{\phi}(x,y)=(Tx,\phi(x)y)$  which in this special case is called a skew shift over $T$.  
\

\

The skew products in general is quite an extensive area of research in ergodic theory, going back to von Neumann, even though the skew products over interval exchange transformations were studied less. We will mention here a few results, related or useful for our purposes, without even pretending to make a comprehensive survey. For  information (and references) about ergodic theory of skew products we refer interested reader to Parry and Pollicott$[9]$. Some more references can also be found in Lind$[7]$. 
\

\

Regarding more specific case, when the base map $T$ is an interval exchange transformation, there are quite many results when $T$ is an irrational rotation, and somewhat less results when $T$ is not an irrational rotation.
\

\

To mention a few ( but definitely far from being comprehensive list) results about skew products over irrational rotations we refer to the works of Pask$[10]$, Conze, Piekniewska$[4]$. See also Conze and Fraczek $[3]$ for more comprehensive list of references about this type of skew products. 
\

\

Finally we would like to mention some results which are actually related to ergodic properties of skew shifts over general interval exchanges. Veech$[11]$ in some partial cases and Avila and Forni$[1]$ in general case proved a weak mixing for typical interval exchange maps which are not irrational rotations. These works served as inspiration for us and to the large extent as a source of ideas, especially Veech criterion of weak mixing, also used by Avila and Forni.
\

\

Conze and Fraczek$[3]$ studied ergodic properties of cocycles with values in some locally compact abelian groups. Fraczek and Ulcigrai$[5]$ proved some non-ergodicity results for specific $\mathbb{Z}$-valued cocycles arising in the study of billiards with infinite periodic obstacles. Recently Chaika and Robertson$[2]$ have shown ergodicity of piecewise constant cocycles with values in $\mathbb{R}$ for some special class of interval exchange transformations, which they call linearly recurrent. As one can see currently all the works on the skew shifts over interval exchange transformations are related to shifts with values in abelian groups, and the nonabelian case has not been treated yet. This paper aims to fill the gap.
\

\

We now move forward to the main result of the paper. As usual the interval exchange transformation $T$ is described by the vector of lengths
$\lambda=(\lambda^1,...,\lambda^n)$, corresponding to the division of interval $I=[0, 1]$ to $n$ subintervals $I_1,..., I_n$ and an irreducible permutation $\pi\in S_n$. We also assume that $T$ is not an irrational rotation. Consider also a compact connected Lie group $G$ and $n$ elements $g^1,..., g^n\in G$. Having now $T$ and $n$ group elements we may consider the following "elementary" $G$-valued function $\phi:[0, 1]\rightarrow G$, namely $\phi(x)=g^k$ for $x\in I_k$. The aim of the paper is to prove the following theorem.
\

\

\begin{Theorem}[Weak mixing] For typical ( with respect to the Lebesgue measure on the vector $\lambda$) interval exchange transformation $T$, not isomorphic to an irrational rotation, and typical ( with respect to the Haar measure on $G$) values $g^1,..., g^n\in G$, the skew shift transformation $T_{\phi}:[0, 1]\times G\rightarrow [0, 1]\times G$, given by formula $T_{\phi}(x,y)=(Tx, \phi(x)y)$, is weakly mixing.
\end{Theorem}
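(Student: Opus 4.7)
The plan is to reduce weak mixing of $T_\phi$ to a representation-theoretic statement via Peter--Weyl and then attack it with a non-abelian adaptation of the Veech criterion used by Avila--Forni. Since $G$ is compact, $L^2([0,1]\times G)$ decomposes under the right action of $G$ on the second factor into isotypic components indexed by irreducible unitary representations $\rho\colon G\to U(V_\rho)$. The trivial representation is $L^2([0,1])$ on which $T_\phi$ acts as $T$, and here weak mixing holds for typical $\lambda$ by Avila--Forni. For every non-trivial $\rho$, an eigenfunction of $T_\phi$ with eigenvalue $e^{2\pi i t}$ supported in the $\rho$-isotypic component corresponds to a nonzero measurable map $f\colon [0,1]\to V_\rho$ solving the twisted cohomological equation
\[
f(Tx)\;=\;e^{2\pi i t}\,\rho(\phi(x))^{-1} f(x).
\]
Thus the theorem reduces to ruling out, for every non-trivial irreducible $\rho$ and every $t\in\mathbb{R}$, any nonzero measurable solution of this equation.

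To attack the twisted equation I would run Rauzy--Veech induction on $T$. For typical $\lambda$ this produces a nested sequence of renormalized interval exchanges on shrinking intervals $J^{(k)}$, together with combinatorial return-time matrices $Q^{(k)}$ whose Lyapunov spectrum (the Kontsevich--Zorich cocycle) is well-controlled. Over each subinterval of $J^{(k)}$ the ``special Birkhoff sum'' of the matrix cocycle $x\mapsto\rho(\phi(x))$ is an \emph{ordered} product of the generators $\rho(g^i)$ whose multiplicities are prescribed by a column of $Q^{(k)}$ and whose ordering is dictated by the Rauzy--Veech combinatorics. Following the Veech mechanism, a nonzero solution $f$ is, by Lusin, essentially constant on most subintervals of $J^{(k)}$ for large $k$; iterating $T$ up to the corresponding return time then forces the ordered matrix products, multiplied by the scalar $e^{2\pi i t r}$ (where $r$ is the return time), to accumulate at the identity in $U(V_\rho)$ along a substantial subsequence of $k$. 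The final step is to exclude this accumulation using Haar-genericity of $(g^1,\ldots,g^n)\in G^n$: for each fixed non-trivial $\rho$ the image $\rho(G)\subset U(V_\rho)$ is a positive-dimensional closed subgroup, and a transversality argument should show that for Haar-a.e.\ tuple $(g^1,\ldots,g^n)$ no nontrivial ordered word in the $\rho(g^i)$ is simultaneously close to a scalar for all large $k$. Coupling this with the hyperbolicity of the Kontsevich--Zorich cocycle (the same input Avila--Forni use to kill abelian eigenvalues) would then contradict the forced accumulation, completing the argument.

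\textbf{Main obstacle.} The principal difficulty, absent in the character-valued settings of Veech and Avila--Forni, is the non-commutativity of the matrix products. For a character $\chi$ one has $\chi(g^{i_m})\cdots\chi(g^{i_1})=\prod_i\chi(g^i)^{n_i}$, so the Veech criterion reduces to a linear Diophantine question about $Q^{(k)}$ applied to a fixed vector in $\mathbb{Z}^n$. For irreducible $\rho$ of dimension $>1$ the ordering of the letters genuinely matters, and words of unbounded length must be controlled simultaneously in $U(V_\rho)$. I expect the hardest technical step to be a uniform non-resonance estimate: for Haar-typical $(g^i)$, the ordered products $\rho(g^{i_m})\cdots\rho(g^{i_1})$ produced by Rauzy--Veech must be quantitatively separated from the one-parameter family of scalars $\{e^{-2\pi i t r}\cdot I:r\in\mathbb{Z}\}$. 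This is where representation-theoretic genericity of $(g^1,\ldots,g^n)$ must be coupled, non-trivially, with Kontsevich--Zorich hyperbolicity to replace the purely Diophantine obstruction available in the abelian case.
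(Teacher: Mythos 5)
Your reduction is sound and matches the paper's starting point: decomposing $L^2([0,1]\times G)$ by Peter--Weyl and ruling out twisted eigenfunction equations for each non-trivial irreducible $\rho$ is exactly the Keynes--Newton criterion the paper invokes, and running Rauzy--Veech induction to force the return-time words in the $\rho(g^i)$ to accumulate at the identity is the correct adaptation of the Veech mechanism. The gap is that your central step --- the ``uniform non-resonance estimate'' showing that for Haar-typical $(g^1,\ldots,g^n)$ the ordered Rauzy--Veech products stay away from scalars --- is only conjectured (``a transversality argument should show\ldots''), and the route you propose for it, coupling quantitative separation of unbounded ordered words with Kontsevich--Zorich hyperbolicity, is not how the difficulty is resolved. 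The paper's key observation is softer: Rauzy--Veech renormalization acts on the tuple $(g^1,\ldots,g^n)$ by explicit Nielsen-type moves on $G^n$ that \emph{preserve Haar measure}, and the ``bad'' limit set $S\subset G^n$ of tuples admitting a common unit vector fixed by all $\Theta(g^k)$ is compact of Haar measure zero (a Haar-generic pair generates a dense subgroup, and a common fixed vector for a dense subgroup contradicts irreducibility when $d\ge 2$). A Poincar\'e-type lemma (the paper's Theorem 3.3) then shows that the set of initial tuples whose orbit under \emph{any} sequence of measure-preserving maps eventually stays in every neighborhood of $S$ has measure at most $\nu(S)=0$. No Lyapunov-exponent or quantitative Diophantine input is needed for $d\ge 2$; only the qualitative Veech recurrence properties $P_1,P_2$ enter. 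The quantitative non-resonance estimate you flag as the hardest step is thus not only unproved in your sketch but also unnecessary.

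Two further points. First, for $d\ge 2$ the Keynes--Newton criterion as used in the paper carries no extra scalar $e^{2\pi i t}$, so the one-parameter family of scalars you worry about only arises for one-dimensional representations. Second, you do not treat that one-dimensional case separately, and there the quantifier ``for all $\alpha$'' is a genuine issue: almost-everywhere-in-$\alpha$ statements do not suffice to exclude eigenvalues. The paper handles it by pulling back to the abelian Avila--Forni setting and using their bound $\dim_H W\le g(\pi)$ on the exceptional set of twist parameters: the tube $W+\mathbb{R}(1,\ldots,1)$ has Hausdorff dimension at most $g(\pi)+1<n$, hence Lebesgue measure zero, which excludes all $\alpha$ simultaneously for almost every tuple. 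This is the only place where the deeper Avila--Forni machinery is needed; your sketch would import it everywhere, including where it cannot plausibly be made to work.
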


\section{Preliminaries}

Here we will provide a necessary background on the Rauzy-Veech induction and the Keynes-Newton criterion of weak mixing for a skew shift. 
\

\subsection{ \textit{Interval exchange transformations}}

 Let $n\geq 2$ and $\lambda=({\lambda}^1,...,{\lambda}^n)\in{\mathbb{R}}^n_+$. Let $\pi\in S_n$ be a permutation on $n$ symbols. A permutation $\pi$ is called \textit{irreducible} if:

 $(1)$ for any $k, 1\leq k< n$, $\pi\{1,...,k\}\neq\{1,...,k\}$. 
 \
 
 $(2)$ for any $k, 1\leq k< n$, $\pi(k+1)\neq\pi(k)+1$
 \
 
 \
 
 \textbf{Remark.} Usually in the literature only the interval exchange transformations satisfying $(1)$ are called irreducible, so our definition is slightly different from usual. We define irreducible permutation this way in order to avoid redundancy, when two consecutive intervals move as one.
 \
 
 \
    
  $S^0_n$ denotes the set of all irreducible permutations on $n$ symbols. We also introduce useful notations $\beta_0=0$ and $\beta_k=\sum\limits_{i=1}^k\lambda_i$, $1\leq i\leq n$. Also the intervals $I_k$ are defined as $I_k=[{\beta}_{k-1}, {\beta}_k)$, $1\leq k\leq n$. An \textit{interval exchange transformation}( from now and further \textit{IET}) defined by pair$(\lambda, \pi)$ is a transformation 
  $T:[0, |\lambda|]\rightarrow [0,|\lambda|]$ interchanging intervals $I_k$ as solid segments, with respect to the permutation $\pi$. Any IET is a piecewise isometry, preserving Lebesgue measure on $[0, |\lambda|]$.
 \
 
 \
 
\subsection{ \textit{Rauzy-Veech induction}}

Given an interval exchange $T=(\lambda; \pi)$ of n intervals such that ${\lambda}^n\neq{\lambda}^{{\pi}^{-1}(n)}$ we have two possibilities:
\

\

$1)$ \textbf{Rauzy rule A.} ${\lambda}^n<{\lambda}^{{\pi}^{-1}(n)}$. In this case put $I'=[0, |\lambda|-{\lambda}^n]$
\

\

$2)$ \textbf{Rauzy rule B.} ${\lambda}^n>{\lambda}^{{\pi}^{-1}(n)}$. In this case put $I'=[0, |\lambda|-{\lambda}^{{\pi}^{-1}(n)}]$
\

\

The first return map of $T$ on $I'$ is again an IET $T'=({\lambda}^{'}, {\pi}^{'})$ of $n$ intervals. The new permutation  depends only on A or B and is denoted $A\pi$ or $B\pi$.
\

\

Since for any  $\pi\in S^0_n$ and for almost all $\lambda\in{\mathbb{R}}^n_+$, ${\lambda}^n\neq{\lambda}^{{\pi}^{-1}(n)}$  , we have a map 
\textbf{R}$:{\mathbb{R}}^n_+\times S^0_n\rightarrow {\mathbb{R}}^n_+\times S^0_n$ defined on the full measure subset. The map $R$ is called the \textit{Rauzy-Veech induction}.
\

\subsection{ \textit{Skew shifts and Keynes-Newton criterion}}
\

Let $T:X\rightarrow X$ be a measure preserving transformation of a probability space $(X,\mu)$, $G$ be a compact topological group with the normalized Haar measure $\nu$ and $\phi: X\rightarrow G$ be a measurable function. Then the \textit{skew shift} is a transformation $T_{\phi}: X\times G\rightarrow X\times G$ given by formula $T_{\phi}(x,y)=(Tx,\phi(x)y)$. $T_{\phi}$ preserves the product measure $\mu\times\nu$.
\

Clearly for $T_{\phi}$ to be ergodic or weakly mixing it is necessary that the base transformation $T$ itself is ergodic or weakly mixing. The sufficient condition for $T_{\phi}$ to be weakly mixing is given by the following criterion due to Keynes and Newton[6],[8],[9].
\

\

\begin{Theorem}\textbf{\textit{Keynes-Newton criterion.}}
\

 Let $T:X\rightarrow X$ be a weakly mixing measure-preserving transformation of a probability space $(X,\mu)$, $G$ be a compact topological group with the normalized Haar measure $\nu$ and  $\phi:X\rightarrow G$ be a measurable function. Then the skew shift  
 $T_{\phi}:X\times G\rightarrow X\times G$ is weakly mixing if and only if:
\

\

$1)$ For any unitary irreducible representation $\Theta: G\rightarrow U(d)$ of dimension $d\geq 2$ the equation 

\begin{equation}
F(Tx)=\Theta(\phi(x))F(x)
\end{equation}
\

does not have nonzero solutions $F\in L^2(X, {\mathbb{C}}^d)$
\

\

$2)$ For any non-trivial representation $\gamma: G\rightarrow U(1)$ and any $\alpha\in\mathbb{C}, |\alpha|=1$ the equation

\begin{equation}
f(Tx)=\alpha\gamma(\phi(x))f(x)
\end{equation}
\

does not have nonzero solutions $f\in L^2(X, \mathbb{C})$
\end{Theorem}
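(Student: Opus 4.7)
The plan is to reduce everything to an eigenfunction analysis of $T_{\phi}$ via the Peter--Weyl decomposition $L^2(G,\nu)=\bigoplus_{\Theta}H_{\Theta}$, where $\Theta$ runs over unitary irreducibles of $G$ and $H_{\Theta}$ is the linear span of the matrix coefficients $\Theta_{ij}$. Tensoring with $L^2(X,\mu)$ gives
\[
L^2(X\times G)=\bigoplus_{\Theta}L^2(X)\otimes H_{\Theta},
\]
and each summand is $T_{\phi}$-invariant because $\phi(x)$ acts on the $G$-coordinate by left translation, under which $H_{\Theta}$ is stable. Weak mixing of $T_{\phi}$ is equivalent to the absence of nonconstant $L^{2}$ eigenfunctions, so it suffices to examine each summand; the trivial piece $L^2(X)\otimes\mathbb{C}$ carries only the action of $T$, whose eigenfunctions are constant by weak mixing of $T$.

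For necessity, I would build an explicit eigenfunction showing that failure of (1) or (2) breaks weak mixing. Given a nonzero $F$ solving $F(Tx)=\Theta(\phi(x))F(x)$ with $d=\dim\Theta\ge 2$, set $\tilde F(x,y):=F(x)^{*}\Theta(y)e$ for a fixed nonzero $e\in\mathbb{C}^{d}$; the cocycle relation yields $T_{\phi}\tilde F=\tilde F$, the Schur orthogonality identity $\int_{G}\Theta(y)^{*}M\,\Theta(y)\,d\nu(y)=\tfrac{\mathrm{tr}\,M}{d}I$ gives $\|\tilde F\|^{2}=\|e\|^{2}\|F\|^{2}/d>0$, and $\int_{G}\Theta(y)\,d\nu=0$ makes the $y$-mean of $\tilde F$ vanish, so $\tilde F$ is a nonconstant $T_\phi$-invariant function. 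A nonzero $f$ solving (2) with character $\gamma$ and eigenvalue $\alpha$ similarly lifts to the nonconstant $\alpha$-eigenfunction $\tilde f(x,y):=f(x)\overline{\gamma}(y)$.

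For sufficiency, assume (1), (2), and weak mixing of $T$. On a one-dimensional nontrivial isotypic piece $\mathbb{C}\gamma$, writing $F=f(x)\gamma(y)$ converts $T_{\phi}F=\alpha F$ into $f(Tx)=\alpha\overline{\gamma}(\phi(x))f(x)$, which is exactly (2) for the character $\overline{\gamma}$ and is therefore excluded. On an isotypic piece for $\Theta$ with $d\ge 2$, expanding $F=\sum_{i,j}c_{ij}(x)\Theta_{ij}(y)$ and matching coefficients gives, for each column index $j$,
\[
\vec c^{\,j}(Tx)=\alpha\,\overline{\Theta(\phi(x))}\,\vec c^{\,j}(x),
\]
where $\vec c^{\,j}:=(c_{1j},\dots,c_{dj})^{T}$. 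This is equation (1) for $\overline{\Theta}$ but with an extra factor $\alpha$, which is precisely the obstacle. To absorb $\alpha$, pick a $j$ with $\vec c^{\,j}\neq 0$ and form $P(x):=\vec c^{\,j}(x)\vec c^{\,j}(x)^{*}$: the factor $|\alpha|^{2}=1$ drops out, yielding $P(Tx)=\overline{\Theta}(\phi(x))\,P(x)\,\overline{\Theta}(\phi(x))^{-1}$. Ergodicity of $T$ forces $\|\vec c^{\,j}\|$ to be an a.e.\ constant, and after normalization $P=\tfrac{1}{d}I+Q$ with $Q$ traceless and nonzero (since $P$ has rank $1<d$). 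By Schur's lemma the trivial isotype of $\mathrm{Ad}\circ\overline{\Theta}$ on $M_{d}(\mathbb{C})$ is exactly $\mathbb{C}\cdot I$, so the decomposition of the traceless subspace into $G$-irreducibles contains only nontrivial summands; projecting $Q$ onto such a summand gives a nonzero solution of (1) if the summand has dimension $\ge 2$, or a nonzero solution of (2) with $\alpha=1$ and a nontrivial character if the summand is one-dimensional, contradicting the hypotheses.

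The main obstacle is exactly this $\alpha$-mismatch between the $\alpha$-free form of (1) and the $\alpha$-twisted equation one encounters inside a higher-dimensional isotypic component; the rank-one outer product $\vec c\,\vec c^{*}$ combined with the decomposition of the adjoint representation $\mathrm{Ad}\circ\overline{\Theta}$ into $G$-irreducibles is the key technical device, and everything else is Peter--Weyl bookkeeping and Schur orthogonality.
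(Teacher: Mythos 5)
The paper does not prove this theorem at all: it is quoted as a known result with references to Keynes--Newton, Parry and Parry--Pollicott, so there is no in-paper argument to compare yours against. Judged on its own, your proof is correct and is essentially the standard derivation of the criterion. The Peter--Weyl decomposition into $T_{\phi}$-invariant isotypic blocks, the explicit eigenfunctions $F(x)^{*}\Theta(y)e$ and $f(x)\overline{\gamma}(y)$ for necessity (with Schur orthogonality guaranteeing they are nonzero and mean-zero in $y$), and the column equation $\vec c^{\,j}(Tx)=\alpha\,\overline{\Theta}(\phi(x))\vec c^{\,j}(x)$ are all right. You also correctly identify the one genuinely delicate point -- that condition $(1)$ carries no $\alpha$ while the equation arising inside a $d\ge 2$ isotypic block does -- and your resolution is sound: $\|\vec c^{\,j}\|$ is $T$-invariant hence constant by ergodicity, the rank-one projector $P=\vec c^{\,j}(\vec c^{\,j})^{*}$ kills $\alpha$, and since the trivial isotype of $\mathrm{Ad}\circ\overline{\Theta}$ is exactly $\mathbb{C}I$ by Schur's lemma, the nonzero traceless part $Q$ must have a nonzero component in some nontrivial irreducible summand, which is unitary for the Hilbert--Schmidt inner product and so contradicts $(1)$ or $(2)$ (with $\alpha=1$) according to its dimension. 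The only points worth spelling out in a final writeup are routine: $\overline{\Theta}$ is again irreducible unitary of the same dimension; projection onto an irreducible summand commutes with the $G$-action and hence with the cocycle equation (handling possible multiplicities by projecting to a single copy); and $Q(x)\neq 0$ a.e.\ because a rank-one projection is not a scalar matrix when $d\geq 2$. None of these is a gap.
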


\section{Extended Rauzy-Veech induction, extended Veech cocycle and adapted Veech criterion for higher-dimensional unitary irreducible representations}
\

\subsection{ \textit{Rauzy maps A and B}}
Let $G$ be a compact connected Lie group with the normalized Haar measure $\nu$. Then the Haar measure for $G^n$ is the product measure
 $\nu\times...\times\nu$ which from now and further we will also denote by $\nu$ without the risk of confusion.
 \

 The Rauzy map  \textbf{A}$:G^n\rightarrow G^n$ is defined as $A(g^1,...,g^n)=(h^1,...,h^n)$, where:

\begin{equation}
    h^k=
    \begin{cases}
      g^k, & \text{if}\ 1\leq k\leq {\pi}^{-1}(n) \\
      g^ng^{{\pi}^{-1}(n)}, & \text{if}\ k={\pi}^{-1}(n)+1\\
      g^{k-1}, & \text{if}\  {\pi}^{-1}(n)+2\leq k\leq n\ (such\ k\ may\ not\ exist)
    \end{cases}
  \end{equation}
  \
  
  The Rauzy map \textbf{B}$:G^n\rightarrow G^n$ is defined as $B(g^1,...,g^n)=(h^1,...,h^n)$, where:
  
  \begin{equation}
    h^k=
    \begin{cases}
      g^k, & \text{if}\ 1\leq k\leq {\pi}^{-1}(n)-1 \ (such\ k\ may\ not\ exist) \\
      g^ng^{{\pi}^{-1}(n)}, & \text{if}\ k={\pi}^{-1}(n)\\
      g^{k}, & \text{if}\  {\pi}^{-1}(n)+1\leq k\leq n
    \end{cases}
  \end{equation}
  \
  
  \
  \begin{Lemma} The Rauzy maps $A$ and $B$ preserve the measure $\nu$ on $G^n$.
  \end{Lemma}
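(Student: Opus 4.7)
The plan is to check directly that each Rauzy map is a bijection of $G^n$ that decomposes as a relabeling of coordinates composed with a single translation in one factor, and then use Fubini together with the left/right invariance of the Haar measure on $G$. Let me write $m=\pi^{-1}(n)$ throughout to lighten notation.

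For the map $A$, I would first observe that $A(g^1,\dots,g^n)=(g^1,\dots,g^m,\,g^ng^m,\,g^{m+1},\dots,g^{n-1})$, so only the single coordinate $g^n$ has been ``used up'' in the product at position $m+1$, while the entries at positions $m+2,\dots,n$ are merely a shift. For any bounded measurable $f:G^n\to\mathbb{C}$ I would compute $\int f\circ A\,d\nu$ by Fubini, integrating over $g^n$ \emph{first}: for every fixed value of $g^m$ the map $g^n\mapsto g^ng^m$ is right translation on $G$, which preserves the Haar measure $\nu$ on $G$, so the change of variable $u=g^ng^m$ eliminates $g^m$ from that slot. What remains is $\int f(g^1,\dots,g^m,u,g^{m+1},\dots,g^{n-1})\,d\nu(g^1)\cdots d\nu(g^{n-1})\,d\nu(u)$, which by the obvious bijective relabeling of dummy variables equals $\int f\,d\nu$. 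Note that the edge cases where $m=n-1$ (the third branch of the definition being empty) are covered automatically since no coordinates need to be shifted.

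For the map $B$, I would observe that $B$ leaves every coordinate fixed except the $m$-th, where it performs $g^m\mapsto g^ng^m$; in particular $g^n$ itself is left untouched in position $n$, so $B$ is obviously a bijection (its inverse recovers $g^m=(h^n)^{-1}h^m$). To prove invariance I would again apply Fubini, this time fixing $g^n$ and integrating in $g^m$: for every fixed $g^n$ the map $g^m\mapsto g^ng^m$ is left translation on $G$, hence preserves $\nu$. After that substitution the integrand becomes $f(h^1,\dots,h^n)$ with $h^k=g^k$ for $k\ne m$ and $h^m$ a fresh Haar-distributed variable, and integrating the remaining coordinates yields $\int f\,d\nu$.

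There is really no hard step here; the only thing to be careful about is the bookkeeping of the index ranges (in particular the parenthetical ``such $k$ may not exist'' cases at the ends, where one of the three branches is empty) and making sure that in each case one uses \emph{right} invariance for $A$ and \emph{left} invariance for $B$, which is precisely why the product $g^ng^m$ appears with $g^n$ on the left in both formulas: in $A$ the free variable to be changed by translation is $g^n$ (right translation by $g^m$), while in $B$ the free variable is $g^m$ (left translation by $g^n$). Both operations preserve the Haar measure on $G$, so in both cases $\nu$ on $G^n$ is preserved.
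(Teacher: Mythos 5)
Your proof is correct and rests on the same mechanism as the paper's: the paper factors $A$ and $B$ into coordinate permutations and a one-coordinate translation (``elementary Nielsen maps'') and asserts these preserve $\nu$, while you carry out the equivalent Fubini/change-of-variables computation explicitly, correctly using right invariance of Haar measure for $A$ and left invariance for $B$ (both available since $G$ is compact, hence unimodular). No gaps; your version just spells out what the paper leaves as ``easily seen.''
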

  
    \begin{proof} The maps $A$ and $B$ are compositions of \textbf{elementary Nielsen maps} 
  $N^{\alpha}_{ij}: G^n\rightarrow G^n, 1\leq i< j\leq n$ and $N^{\beta}: G^n\rightarrow G^n$ defined by 
  \
  
  $N^{\alpha}_{ij}(g^1,...,g^i,...,g^j,...,g^n)=(g^1,...,g^j,...,g^i,...,g^n)$
   and 
\
   
   $N^{\beta}(g^1, g^2,...,g^n)=(g^2g^1, g^2,...,g^n)$. Both 
  $N^{\alpha}_{ij}$ and $N^{\beta}$ are easily seen to preserve $\nu$.
  
  \end{proof}
\subsection{ \textit{Extended Rauzy-Veech induction, extended Veech cocycle and adapted Veech criterion}}
Let us consider an IET $T=(\lambda, \pi)$ with permuted intervals $I_1,..., I_n$. The \textit{simple function} $\phi:[0, 1]\rightarrow G$ is defined as
$\phi(x)=g^k$, if $x\in I_k$, $1\leq k\leq n$, where the $n$-tuple $g=(q^1,...,g^n)\in G^n$. Given an IET 
$T=(\lambda, \pi)\in{\mathbb{R}}^n_+\times S^0_n$ and a simple function $\phi$, the \textit{ simple skew shift} $T_{\phi}$ is uniquely defined by the triple $(\lambda, \pi, g)$, and so ${\mathbb{R}}^n_+\times S^0_n\times G^n$ is the \textit{space of simple skew shifts}.
\

\

Given a simple skew shift $T_{\phi}=(\lambda, \pi, g):[0, |\lambda|]\times G\rightarrow [0, |\lambda|]\times G$ let $({\lambda}^{'}, {\pi}^{'})=R(\lambda,\pi)$. One easily sees that the first return map of $T_{\phi}$ on the set $[0,|{\lambda}^{'}|]\times G$ is again a simple skew shift given by the triple $({\lambda}^{'},{\pi}^{'},g^{'})$ where $g^{'}=$\textbf{A}$g$ or $g^{'}=$\textbf{B}$g$ depending on which Rauzy rule was used for $(\lambda, \pi)$. 
\

\

The \textit{Extended Rauzy-Veech induction} is a map $\overline{R}:{\mathbb{R}}^n_+\times S^0_n\times G^n\rightarrow {\mathbb{R}}^n_+\times S^0_n\times G^n$, defined for full measure set of $(\lambda, \pi)$ such that for the induced tripe $({\lambda}^{'},{\pi}^{'},g^{'})=\overline{R}(\lambda,\pi, g)$.
\

\

 And the \textit{Extended Veech cocycle} is a map $\Gamma:{\mathbb{R}}^n_+\times S^0_n\rightarrow Homeo({G^n})$, defined for almost every $(\lambda, \pi)$ by 
 
  \begin{equation}
    \Gamma(\lambda, \pi)g=
    \begin{cases}
      Ag, & \text{if}\ {\lambda}_n<\lambda_{{\pi}^{-1}(n)} \\
      Bg, & \text{if}\ {\lambda}_n>\lambda_{{\pi}^{-1}(n)}
    \end{cases}
  \end{equation}

\

From the definitions of $\overline{R}$ and $\Gamma$ follows identity 
$\overline{R}(\lambda, \pi, g)=(R(\lambda, \pi), \Gamma(\lambda, \pi)g)$ so $\overline{R}$ itself is a skew product over $R$.
\

\

We now remind two properties of generic IETs by  Veech, which we will combine with Keynes-Newton criterion. Let $m\in{\mathbb{Z}}_+$ and $({\lambda}_m,{\pi}_m)=R^m(\lambda, \pi)$ and $I_m=[0, {\lambda}_m]$.
\

\begin{defn}$\{$Veech property $P_1(\epsilon, m)$.$\}$ An IET $T=(\lambda, \pi)$ is said to satisfy property $P_1(\epsilon, m)$ if there exists 
$b\geq \epsilon\frac{|\lambda|}{|{\lambda}_m|}$, such that ${\beta}_i(\lambda)\notin T^kI_m$, for $1\leq i\leq n-1$ and $0\leq k<b$.
\end{defn}
\

\begin{defn}$\{$Veech property $P_2(\epsilon, m)$.$\}$ An IET $T=(\lambda, \pi)$ is said to satisfy property $P_2(\epsilon, m)$ if 
${\lambda}^{i}_m\geq\epsilon|{\lambda}_m|$ for $1\leq i\leq n$. 
\end{defn}

\begin{Theorem}$\{$Veech$\}$. There is an  $\epsilon(n)>0$ and a full measure set $P$ of IETs,  $P\subseteq{\mathbb{R}}^n_+\times S^0_n$ such that for any IET $T\in P$ there is an infinite set $E\subset {\mathbb{Z}}_+$, $E=E(T)$, such that for any $m\in E$, $T$ satisfies $P_1(\epsilon, m)$ and $P_2(\epsilon, m)$.
\end{Theorem}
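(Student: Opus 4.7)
The plan is to leverage the ergodic theory of the normalized Rauzy-Veech induction on parameter space. First I would pass to the Zorich acceleration $R_Z$, obtained from $R$ by grouping consecutive applications of the same Rauzy rule; projecting to the simplex $\Delta=\{\lambda\in\mathbb{R}^n_+:|\lambda|=1\}$, the resulting map $\widehat{R}_Z:\Delta\times S^0_n\to\Delta\times S^0_n$ admits a finite absolutely continuous invariant measure $\mu_Z$ which is ergodic on each Rauzy class. Both Veech properties will be interpreted as infinitely many visits of a typical orbit to an appropriate positive-measure set.

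Property $P_2(\epsilon,m)$ is immediate from a compactness argument. With
\[
K_\epsilon=\{(\lambda,\pi)\in\Delta\times S^0_n:\lambda^i\geq\epsilon\text{ for all }i\},
\]
any sufficiently small $\epsilon=\epsilon(n)$ makes $\mu_Z(K_\epsilon)>0$ on each Rauzy class. Birkhoff's ergodic theorem applied to $(\widehat{R}_Z,\mu_Z)$ then provides, for $\mu_Z$-a.e.\ starting IET, infinitely many Zorich return times to $K_\epsilon$, and translating back to Rauzy-Veech iterates (a bounded-in-average time change) yields an infinite set $E(T)\subset\mathbb{Z}_+$ with $R^m(\lambda,\pi)\in K_\epsilon$, which is exactly $P_2(\epsilon,m)$.

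For $P_1(\epsilon,m)$ I would use the Rokhlin tower representation of $T$ over the induced IET on $I_m$: the phase space decomposes into $n$ towers with bases $I_m^j$ and heights $h_m^j$ satisfying $\sum_j h_m^j\lambda_m^j=|\lambda|$, and on $K_\epsilon$ these heights are uniformly comparable to $|\lambda|/|\lambda_m|$. The condition $\beta_i(\lambda)\notin T^kI_m$ for $0\leq k<b$ is equivalent to $\beta_i$ sitting at tower height at least $b$, i.e.\ to the first preimage of $\beta_i$ under $T^{-1}$ landing in $\overline{I_m}$ only after at least $b$ steps. Each discontinuity is an iterate under $T$ of an endpoint of some $I_m^j$, so its relative height $k_i/h_m^{j(i)}$ is a locally constant function of the permutation and a finite amount of preceding Rauzy-Veech data.

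The main obstacle is precisely this last combinatorial tracking: producing a positive-measure compact subset $K'_\epsilon\subset K_\epsilon$ (defined by a uniform lower bound on all relative heights $k_i/h_m^{j(i)}$ across a bounded-length preceding Rauzy-Veech history) on which every discontinuity sits a definite fraction of the way up its tower. This reduces to a finite combinatorial check over the (finitely many) admissible histories within each Rauzy class, together with the openness of the defining inequalities in $\Delta$. Once $K'_\epsilon$ has been constructed and shown to have positive $\mu_Z$-measure, a final application of the Birkhoff ergodic theorem to $K'_\epsilon$ yields $P_1(\epsilon,m)$ simultaneously with $P_2(\epsilon,m)$ along the same sequence $E$, after a final adjustment of $\epsilon(n)$.
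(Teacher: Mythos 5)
The paper offers no proof of this statement: it is quoted, notation $P_1,P_2$ and all, from Veech [11], where establishing it occupies a substantial part of that paper. So your proposal must be measured against Veech's argument. Your overall architecture --- renormalization dynamics on parameter space, a positive-measure ``good'' set defined by finite combinatorial data, and a recurrence/ergodicity argument producing infinitely many visits --- is indeed the right one (Veech works with the conservative infinite a.c.i.m.\ for $R$ rather than the Zorich acceleration, but that difference is immaterial), and your treatment of $P_2$ is complete: $K_\epsilon$ has positive invariant measure and Birkhoff does the rest.

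The gap is in $P_1$, and it is exactly the step you set aside as ``the main obstacle.'' The claim that the relative height $k_i/h_m^{j(i)}$ of a discontinuity $\beta_i$ in the time-$m$ tower ``is a locally constant function of the permutation and a finite amount of preceding Rauzy--Veech data'' is false: $k_i(m)$ is the first entrance time of the backward orbit of $\beta_i$ into $I_m$, and both it and the heights $h_m^j$ accumulate the \emph{entire} Rauzy path from time $0$ to time $m$. The height vectors arising at time $m$ form an unbounded family, so no ``finite combinatorial check over admissible histories'' can certify a uniform lower bound on $k_i/h_m^{j(i)}$; the set $K'_\epsilon$ you need is not defined by a condition measurable with respect to a bounded window, and this is precisely why the theorem is not a routine corollary of ergodicity. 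The correct mechanism (Veech's) is a uniform geometric estimate rather than a case check: fix a finite Rauzy word $\gamma$ whose incidence matrix is strictly positive, and condition on the orbit traversing two or three consecutive copies of $\gamma$ ending at time $m$. Positivity over the earliest copy forces every height $h^l$ at the intermediate stage to dominate $|\lambda|/(C_\gamma|\lambda_m|)$ (since each intermediate tower contains a full copy of every earlier tower, and the norm of the fixed matrix controls $|\lambda_{m'''}|/|\lambda_m|$); positivity over the next copy forces every discontinuity of $T$, whose backward orbit has by then already entered the towers strictly above the base, to sit above at least one complete intermediate-stage tower. Combining gives $k_i(m)\geq \epsilon|\lambda|/|\lambda_m|$ with $\epsilon$ depending only on $n$ and $\gamma$. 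This cylinder has positive measure, so the Birkhoff step you describe then closes the argument; but without this stacking estimate the proof of $P_1$ is missing, not merely deferred.
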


In the proof of Theorem 3.2 we essentially follow Veech argument$[11]$, just slightly adapting it for our purposes.

\begin{Theorem} For a full measure set $P$ of IETs, $P\subseteq{\mathbb{R}}^n_+\times S^0_n$, $n\geq 2$ and for all $g\in G^n$ the following property takes place.
\
Let $\phi(x):[0, |\lambda|]\rightarrow G$ be a simple function, constructed by $g$.   Assume that for $T\in P$ and for a unitary representation $\Theta: G\rightarrow U(d)$, $d\geq 2$  the equation 

\begin{equation} 
F(Tx)=\Theta(\phi(x))F(x)
\end{equation}
\

has a nonzero solution $F\in L^2(X, {\mathbb{C}}^d)$. Denote $({\lambda}_m, {\pi}_m, g_m)= {\overline{R}}^m(\lambda, \pi, g)$.  Then there exists a sequence of vectors $w_m\in{\mathbb{C}}^d, ||w_m||=1$, such that
 $||\Theta(g^k_m)w_m-w_m||\rightarrow 0$, for $1\leq k\leq n$. 
\end{Theorem}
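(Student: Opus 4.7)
The plan is to adapt Veech's weak-mixing argument for interval exchange transformations to the vector-valued cocycle equation. I work inside the full measure set $P$ of Theorem 3.2, shrunk if necessary so that $T$ is also ergodic (almost every IET is uniquely ergodic by Masur--Veech). Because $\Theta$ is unitary, $\|F(Tx)\|=\|F(x)\|$, so $\|F\|^{2}$ is $T$-invariant; by ergodicity it is a.e.\ constant, and after rescaling I may assume $\|F\|\equiv 1$ a.e.

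Define
\[
w_m:=\frac{1}{|I_m|}\int_{I_m}F(x)\,dx,\qquad \sigma_m^{2}:=\frac{1}{|I_m|}\int_{I_m}\|F(x)-w_m\|^{2}\,dx .
\]
The variance identity together with $\|F\|\equiv 1$ gives $\|w_m\|^{2}+\sigma_m^{2}=1$. A short computation using $P_{2}(\epsilon,m)$ then bounds $\|\Theta(g^{k}_{m})w_m-w_m\|$ by $2\sigma_m/\sqrt{\epsilon}$: on $I_m$ the induced function satisfies $F(T_m x)=\Theta(g^{k}_{m})F(x)$ for $x\in I^{k}_{m}$, so if $u^{k}_{m},v^{k}_{m}$ denote the averages of $F$ over $I^{k}_{m}$ and $T_m I^{k}_{m}$, then $v^{k}_{m}=\Theta(g^{k}_{m})u^{k}_{m}$; Cauchy--Schwarz combined with $|I^{k}_{m}|\ge\epsilon|I_m|$ yields $\|u^{k}_{m}-w_m\|,\|v^{k}_{m}-w_m\|\le\sigma_m/\sqrt{\epsilon}$, and the triangle inequality finishes.

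The main step is to show $\|w_m\|\to 1$ (equivalently $\sigma_m\to 0$) as $m\to\infty$ along the Veech good times $E=E(T)$. Here I use $P_{1}(\epsilon,m)$: it provides a Rokhlin tower $B_k=T^{k}I_m$, $0\le k<b$, consisting of disjoint intervals on each of which $\phi$ is constant, with $b|I_m|\ge\epsilon|\lambda|$. On this tower $F(T^{k}x)=M_k F(x)$ for $x\in I_m$ and some fixed $M_k\in U(d)$; in particular the average of $F$ over $B_k$ equals $M_k w_m$, so it has norm $\|w_m\|$. For any Lebesgue point $y$ of $F$ that lies in some tower level $B_k$, that level is an interval of length $|I_m|$ containing $y$ and shrinking regularly to it as $|I_m|\to 0$, so Lebesgue differentiation yields $\|w_m\|\to\|F(y)\|=1$ along the subsequence of $m\in E$ for which $y$ is in the tower. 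Since each tower has measure at least $\epsilon|\lambda|$, Fatou's lemma for sets in a finite measure space guarantees that the collection of such ``good'' $y$ has measure at least $\epsilon|\lambda|$. A contradiction argument then upgrades this pointwise subsequential convergence to $\|w_m\|\to 1$ along all of $E$: any subsequence $E'\subseteq E$ on which $\|w_m\|\le 1-\delta$ would, by the same reasoning, contain infinitely many tower-hits of some Lebesgue point $y$ for which $\|w_m\|$ must tend to $1$, contradicting the bound.

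Combining the two steps, $\sigma_m\to 0$ forces $\|\Theta(g^{k}_{m})w_m-w_m\|\to 0$ for every $k$, and the normalised vectors $\hat w_m:=w_m/\|w_m\|$, well-defined for large $m$, meet the conclusion. The crux is the Lebesgue-differentiation/Fatou step, which converts the horizontal regularity of $F$ and the vertical structure of the Rokhlin tower into control of $\|w_m\|$; everything else is a clean $P_{2}$ estimate.
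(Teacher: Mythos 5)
Your proof is correct and follows essentially the same route as the paper's, which is Veech's original argument: use the Rokhlin tower from $P_1(\epsilon,m)$ together with Lebesgue differentiation to show $F$ is nearly constant on $I_m$, then use $P_2(\epsilon,m)$ and the induced relation $F(T_m x)=\Theta(g^k_m)F(x)$ on $I^k_m$ to convert that into approximate $\Theta(g^k_m)$-invariance of a unit vector. The only differences are cosmetic: you work with $L^2$ averages, the variance identity $\|w_m\|^2+\sigma_m^2=1$, and Cauchy--Schwarz where the paper selects a good tower level, pulls it back to $J$, and uses an $L^1$ bound with Chebyshev to find a single good point $x\in I^k_m$; your version makes the "nearly constant on $I_m$" step (which the paper leaves largely implicit) more explicit and quantitative.
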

\

\begin{proof}

Let $\delta>0$ be arbitrary and $P$ be a set of IETs of full measure from Theorem 3.1 and $T\in P$. Without loss of generality we may assume that $||F(x)||=1, x\in[0, 1]$. If $m\in E$ and $J=[0, |{\lambda}_m|]$, $P_1(\epsilon,m)$ implies $T^kJ$ is an interval for $0\leq k<b$ ( $b$ depends on $m$) and also that $|{\cup}_{k=0}^{b-1}T^kJ|\geq\epsilon|\lambda|$. As $m\rightarrow\infty$, $|J|=|{\lambda}_m|\rightarrow 0$; therefore, if $m\in E$ is sufficiently large, there exist $k$ and $w_k\in{\mathbb{C}}^d, ||w||=1$ such that $0\leq k<b$ and
\

\begin{equation}
{\int}_{T^kJ}||f(x)-w_k||dx<\delta|J|
\end{equation}
\

As $F(T^kx)=\Theta(\phi(T^{k-1}x))...\Theta(\phi(x))F(x)$ and since $k<b$ the operator product in braces is constant on $J$. It follows that there exists $w=w(J)\in{\mathbb{C}}^d, ||w||=1$ such that

\begin{equation}
{\int}_J||F(x)-w||dx<\delta|J|
\end{equation}
\

By the Tchebyshev inequality the set $\{x\in J: ||F(x)-w||\geq\sqrt{\delta}$ has measure at most $\sqrt{\delta}|J|$.
\

\

Let $a^k_m$ denote the first return time of $I^k_m$ into $I_m$. We have the relation $F(T^{a^k_m}x)=\Theta(g^k_m)F(x)$ for $x\in I^k_m$, for $1\leq k\leq n$. If $1\leq k\leq n$ and if there is an $x\in I^k_m$ such that $||F(x)-w||\leq{\sqrt{\delta}}$ and $F(T^{a^k_m}x)-w||\leq{\sqrt{\delta}}$ then $||\Theta(g^k_m)w_m-w_m||\leq 2\sqrt{\delta}$. The existence of such an $x$ is guaranteed by $P_2(\epsilon, n)$ if $\delta$ is chosen so that  $\delta<\frac{1}{4}{\epsilon}^2$.

\end{proof}

\begin{Lemma} Let $\Theta: G\rightarrow U(d)$ be a $d$-dimensional unitary irreducible representation of a compact connected Lie group $G$ and $d\geq2$. Let $n\geq 2$ be a positive integer and $S\subseteq G^n$ be a set of $n$-tuples $g=(g^1,...,g^n)$ such that there exists a vector $w\in{\mathbb{C}}^d, ||w||=1$ such that $\Theta(g^k)w=w$ for $1\leq k\leq n$. Then $S$ is a compact set of measure zero with respect to the the normalized Haar measure $\nu$ on $G^n$.
\end{Lemma}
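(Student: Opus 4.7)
The plan is to derive the measure-zero statement from the classical density of topologically generating tuples and then use irreducibility of $\Theta$ to get a Schur-type contradiction. Compactness will fall out of closedness together with compactness of $G^n$.

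For closedness of $S$, I would take a convergent sequence $(g^1_m,\ldots,g^n_m)\to(g^1,\ldots,g^n)$ of tuples in $S$ and choose associated unit fixed vectors $w_m\in\mathbb{C}^d$. Compactness of the unit sphere in $\mathbb{C}^d$ produces a subsequential limit $w$ of unit norm, and continuity of $\Theta$ yields $\Theta(g^k)w=w$ for every $k$. Hence $S$ is closed in the compact space $G^n$, and therefore compact.

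For the measure-zero claim I would invoke the classical theorem that for any compact connected Lie group $G$ and any $n\geq 2$, the set $D\subseteq G^n$ of $n$-tuples topologically generating $G$ has full Haar measure. I claim $S\cap D=\emptyset$: if $(g^1,\ldots,g^n)\in D$ admitted a common unit fixed vector $w$, then the closed subgroup $\mathrm{Stab}(w):=\{g\in G:\Theta(g)w=w\}$ would contain each $g^k$, hence the dense subgroup $\langle g^1,\ldots,g^n\rangle$, hence all of $G$. But then $\mathbb{C}w$ would be a $1$-dimensional $G$-invariant subspace of $\mathbb{C}^d$, contradicting the irreducibility of $\Theta$ together with $d\geq 2$. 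Consequently $S\subseteq G^n\setminus D$, which has Haar measure zero.

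The main obstacle is the density statement itself---that for $n\geq 2$ topologically generating $n$-tuples form a full measure subset of $G^n$---which is a well-known but non-elementary fact about compact connected Lie groups (it is trivial for tori by Kronecker--Weyl and classical in the semisimple case). Everything else, namely the closedness argument and the Schur-type contradiction, is routine.
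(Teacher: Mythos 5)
Your proposal is correct and follows essentially the same route as the paper: closedness of $S$ via a subsequential limit of the unit fixed vectors, and measure zero via the classical full-measure density of topologically generating tuples combined with the Schur-type observation that a common fixed unit vector for a dense subgroup would violate irreducibility. The only cosmetic difference is that the paper reduces to generating \emph{pairs} $(g^1,g^2)\in G^2$ before invoking the classical density result, whereas you apply the $n$-tuple version directly; these are equivalent by projecting onto the first two coordinates.
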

\

\begin{proof} Let us prove compactness of $S$ first. As $G$ is a compact group then it is enough to prove that $S$ is closed. Let $g_m\rightarrow g$ be a sequence of elements of $S$, and $w_m\in{\mathbb{C}}^d, ||w_m||=1$ such that $\Theta(g^k_m)w_m=w_m$. Using compactness of 
${\mathbf{S}}^{d-1}\in{\mathbb{C}}^d$ we may pass to subsequence and assume that $w_m\rightarrow w$, $||w||=1$. Then 
$\Theta(g^k)w-w=(\Theta(g^k)w-\Theta(g^k_m)w)+(\Theta(g^k_m)w-\Theta(g^k_m)w_m)+(\Theta(g^k_m)w_m-w_m)+(w_m-w)$. Using unitarity of $\Theta$ and triangle inequality we see that the righthandside of the latter identity goes to zero which implies that $\Theta(g^k)w=w$.The proof of compactness of $S$ is over.
\

\

We move on to prove that $S$ has zero measure. It is enough to prove that a full measure set of pairs $(g^1, g^2)\in G^2$ satisfies the property:  there \textbf{does not exist} a vector $w\in{\mathbb{C}}^d,||w||=1$ such that $\Theta(g^1)w=w$ and $\Theta(g^2)w=w$. It is a classical result that for any compact connected Lie group there is a set of pairs $P\in G^2$ of a full measure, such that any pair $g=(g^1, g^2)\in P$ generates a dense subgroup. For such a generating pair existence of $w\in{\mathbb{C}}^d,||w||=1$ such that $\Theta(g^1)w=w$ and $\Theta(g^2)w=w$ would imply that for any $g\in G$, $\Theta(g)w=w$ and this contradicts irreducibility of $\Theta$.

\end{proof}
\

\begin{Lemma} Assume $\Theta:G\rightarrow U(d)$ is a $d$-dimensional unitary irreducible representation of $G$, $d\geq 2$ and $g_m=(g^1_m,..., g^n_m)\in G^n$ is a sequence of $n$-tuples, such that there is sequence of vectors $w_m\in{\mathbb{C}}^n, ||w_m||=1$, satisfying $||\Theta(g^k_m)w_m-w_m||\rightarrow 0$, for $1\leq k\leq n$. Then $\textbf{dist}(g_m,S)\rightarrow 0$.

\begin{proof}
Assume that $\textbf{dist}(g_m,S)\nrightarrow 0$, then by passing to subsequence we may assume that $\textbf{dist}(g_m,S)\geq\epsilon$ for some $\epsilon>0$. As the set $S_{\epsilon}=\{g\in G^n |\textbf{dist}(g,S)<\epsilon\} $ is clearly open, then $G^n\backslash S_{\epsilon}$ is compact. By passing to subsequence we may assume that there is an $n$-tuple $g\in G^n\backslash S_{\epsilon}$, such that $g_m\rightarrow g$.
\

\

 Moreover as $||w_m||=1$ and a unit sphere ${\textbf{S}}^{d-1}\in\mathbb{C}$ is compact we may assume, one more time passing to subsequence, that there is a vector $w\in\mathbb{C}, ||w||=1$, that $w_m\rightarrow w$. Then 
$\Theta(g^k)w-w=(\Theta(g^k)w-\Theta(g^k_m)w)+(\Theta(g^k_m)w-\Theta(g^k_m)w_m)+(\Theta(g^k_m)w_m-w_m)+(w_m-w)$. Using unitarity of $\Theta$ and triangle inequality we see that the righthandside of the latter identity goes to zero which implies that $\Theta(g^k)w=w$. But this means that $g\in S$ which is not possible as $g\in G^n\backslash S_{\epsilon}$.
\end{proof}

\end{Lemma}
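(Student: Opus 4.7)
The plan is to argue by contradiction, using compactness of $G^n$ and of the unit sphere in $\mathbb{C}^d$, together with the fact established in the previous lemma that $S$ is closed. Suppose the conclusion fails. Then, after passing to a subsequence, there exists $\epsilon > 0$ such that $\textbf{dist}(g_m, S) \geq \epsilon$ for every $m$. Equivalently, every $g_m$ lies in the complement of the open $\epsilon$-neighbourhood $S_\epsilon = \{h \in G^n : \textbf{dist}(h, S) < \epsilon\}$; this complement is closed in the compact space $G^n$, hence itself compact.

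By compactness of $G^n \setminus S_\epsilon$ I extract a subsequence along which $g_m \to g \in G^n \setminus S_\epsilon$. Independently, each $w_m$ sits on the unit sphere of $\mathbb{C}^d$, which is compact, so a further subsequence gives $w_m \to w$ with $\|w\| = 1$. To pass the near-fixed-vector relation to the limit I split, for each $1 \leq k \leq n$,
\begin{equation*}
\Theta(g^k)w - w = \bigl(\Theta(g^k) - \Theta(g_m^k)\bigr)w + \Theta(g_m^k)(w - w_m) + \bigl(\Theta(g_m^k)w_m - w_m\bigr) + (w_m - w).
\end{equation*}
Continuity of $\Theta$ kills the first summand, unitarity of $\Theta(g_m^k)$ bounds the second by $\|w - w_m\|$, the third vanishes by hypothesis, and the fourth by construction. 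Letting $m \to \infty$ gives $\Theta(g^k)w = w$ for every $k$, so by the very definition of $S$ we have $g \in S$, contradicting $g \in G^n \setminus S_\epsilon$.

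I do not anticipate a substantive obstacle here: the argument is a routine compactness-plus-continuity sandwich, performed twice (once in $G^n$ and once on the sphere in $\mathbb{C}^d$). The only non-trivial structural input is that $S$ is genuinely closed in $G^n$, which is exactly the content of the first half of Lemma 3.2 and is what prevents the limit point $g$ from simultaneously lying in $S$ and at distance at least $\epsilon$ from $S$. A minor point to be careful about is that the two subsequence extractions must be nested, so that $g_m \to g$ and $w_m \to w$ hold along one common index sequence — but this is automatic.
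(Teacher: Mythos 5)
Your proposal is correct and follows essentially the same route as the paper: a proof by contradiction using compactness of $G^n\setminus S_{\epsilon}$ and of the unit sphere to extract a common convergent subsequence, followed by the same four-term telescoping decomposition of $\Theta(g^k)w-w$ controlled by continuity and unitarity. The only cosmetic difference is that you invoke the closedness of $S$ explicitly, which is not strictly needed here since $\textbf{dist}(g,S)\geq\epsilon>0$ already forces $g\notin S$.
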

\

\begin{Theorem} Assume that $T_m: X\rightarrow X$ is a sequence of measure preserving automorphisms of a probability space $(X,\mu)$ and $A\subseteq X$ is a measurable subset. Let $Y$ be a set of points which \textit{eventually stay in $A$}, or more formally $\forall y\in Y$ $\exists$ $m(y)$ such that $\forall m\geq m(y) : T_m(y)\in A$. Then $\mu(Y)\leq\mu(A)$.

\end{Theorem}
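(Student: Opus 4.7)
The plan is to write $Y$ as a countable increasing union of sets whose measures are bounded by $\mu(A)$, and then pass to the limit.

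More precisely, for each integer $k \geq 1$ I would define
\[
Y_k \;=\; \bigcap_{m \geq k} T_m^{-1}(A) \;=\; \{\,y \in X : T_m(y) \in A \text{ for all } m \geq k\,\}.
\]
The definition of $Y$ gives exactly $Y = \bigcup_{k \geq 1} Y_k$, and since the $Y_k$ are nested increasing ($Y_k \subseteq Y_{k+1}$) and measurable (each is a countable intersection of measurable sets, using that the $T_m$ are measurable), continuity of $\mu$ from below yields $\mu(Y) = \lim_{k \to \infty} \mu(Y_k)$.

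Next I would bound each $\mu(Y_k)$ by $\mu(A)$. For any fixed $m \geq k$ the definition of $Y_k$ gives the inclusion $Y_k \subseteq T_m^{-1}(A)$, and since $T_m$ preserves $\mu$ we have $\mu(T_m^{-1}(A)) = \mu(A)$. Hence $\mu(Y_k) \leq \mu(A)$ for every $k$, and taking $k \to \infty$ gives $\mu(Y) \leq \mu(A)$, as desired.

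There is really no hard step here; the only care needed is the measurability of $Y$, which is automatic from expressing it as a countable union of countable intersections of preimages of the measurable set $A$ under measurable maps. Note also that one does not need the $T_m$ to be invertible in any essential way beyond the measure-preserving identity $\mu(T_m^{-1}(A)) = \mu(A)$; the word ``automorphism'' in the hypothesis is used only in that sense.
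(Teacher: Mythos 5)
Your proof is correct and is essentially the paper's argument: the paper disjointifies $Y$ into pieces $Y_p$ (points staying in $A$ from time $p$ but not from $p-1$), whose partial unions $\bigcup_{p=0}^{m}Y_p$ are exactly your increasing sets $Y_k$, and bounds their measure by $\mu(A)$ via measure preservation before passing to the limit. The only (cosmetic) difference is that you use preimages $T_m^{-1}(A)$, which needs only measure preservation, whereas the paper pushes forward, $T_m\bigl(\bigcup_{p=0}^{m}Y_p\bigr)\subseteq A$, implicitly using that the $T_m$ are automorphisms.
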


\begin{proof}
For each non-negative integer $p$ we define the set $Y_p\in X$ as follows: 
\

\

$Y_p=\{y\in X|$ $ 1) \forall m\geq p: T_m(y)\in A ($ here we assume that $T^0(x)=x);$ $2)$ Either $p=0$ or $T^{p-1}(y)\notin A$. Informally speaking $Y_p$ is a set of points, which stay in $A$ since the time $p$ , but not since time $p-1$. Clearly the sets $Y_p$ do not intersect for $0\leq p<\infty$ and $Y=\bigcup\limits_{p=0}^{\infty}Y_p$. 
\

Now   $T_m(\bigcup\limits_{p=0}^{m}Y_p)\subseteq A$ by definition of the sets $Y_p$. And as $T_m$ preserves $\mu$ we have that 
$\mu(A)\geq\mu(T_n(\bigcup\limits_{p=0}^{m}Y_p))=\mu(\bigcup\limits_{p=0}^{m}Y_p)$. As $Y=\bigcup\limits_{p=0}^{\infty}Y_p$ we have that $\mu(Y)=\lim\mu(\bigcup\limits_{p=0}^{m}Y_p)\leq\mu(A)$ Q.E.D.  
\end{proof}
\

We are now ready to prove the main theorem of this chapter. 
\

\begin{Theorem}Let $d\geq 2$ and $\Theta:G\rightarrow U(d)$ be an irreducible unitary representation of $G$. Let $n\geq 3$. Then for almost all triples $(\lambda,\pi,g)\in{\Delta}_{n-1}\times S^0_n\times G^n$ the equation

\begin{equation}
F(Tx)=\Theta(\phi(x))F(x)
\end{equation}

\

has only a trivial solution $F(x)=0\in L^2([0, 1], {\mathbb{C}}^d)$
\end{Theorem}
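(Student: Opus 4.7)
The strategy is to combine the four ingredients from the section: the Veech-criterion Theorem~3.3 (giving a sequence of approximately invariant unit vectors $w_m$), Lemma~3.2 (the bad set $S\subseteq G^n$ is compact and $\nu$-null), Lemma~3.3 (the approximate invariance forces $g_m$ into any open neighborhood of $S$), and Theorem~3.2 (the ``eventually in $A$'' principle for sequences of measure-preserving maps). Fubini over $(\lambda,\pi)$ will glue everything into a single null set.

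The plan is to work fiberwise in the $G^n$-coordinate. Fix $(\lambda,\pi)\in P$, where $P$ is the Veech full-measure set from Theorem~3.1, so that Theorem~3.3 applies. Let $B(\lambda,\pi)\subseteq G^n$ be the set of $g$ for which the cohomological equation has a nonzero $L^2$ solution. For $g\in B(\lambda,\pi)$, Theorem~3.3 provides unit vectors $w_m$ with $\|\Theta(g_m^k)w_m-w_m\|\to 0$ for each $k$, where $g_m$ is the $G^n$-component of $\overline{R}^m(\lambda,\pi,g)$. Lemma~3.3 then gives $\textbf{dist}(g_m,S)\to 0$. Consequently, for every open neighborhood $U$ of $S$ in $G^n$, every $g\in B(\lambda,\pi)$ satisfies $g_m\in U$ for all sufficiently large $m$.

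Now observe that the maps $T_m\colon G^n\to G^n$ defined by $T_m(g)=g_m$ are compositions of the Rauzy maps $A$ and $B$ evaluated along the deterministic sequence of Rauzy rules associated to the fixed $(\lambda,\pi)$. By Lemma~3.1 each such $T_m$ preserves the Haar measure $\nu$ on $G^n$. Hence Theorem~3.2, applied to $(G^n,\nu)$ with $A:=U$ and $Y\supseteq B(\lambda,\pi)$, yields
\begin{equation}
\nu\bigl(B(\lambda,\pi)\bigr)\ \le\ \nu(U).
\end{equation}
Since $S$ is compact and $\nu$-null (Lemma~3.2), we can choose $U$ of arbitrarily small $\nu$-measure, so $\nu(B(\lambda,\pi))=0$ for every $(\lambda,\pi)\in P$. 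An application of Fubini to $\Delta_{n-1}\times S^0_n\times G^n$ (the base has full measure $P$, each fiber is null) gives that the set of triples for which a nonzero solution exists has measure zero, which is the conclusion.

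The essentially nontrivial point, already handled in the supporting lemmas, is that even though $\overline{R}$ itself does not preserve a finite measure on $\Delta_{n-1}\times S^0_n\times G^n$, its fiber action on $G^n$ is measure-preserving by Lemma~3.1; this is what lets Theorem~3.2 replace a classical ergodic-recurrence argument. The only mild care needed in writing the proof is to verify that $B(\lambda,\pi)$ is measurable (so that Fubini can be invoked): this follows from expressing ``there exists a nonzero $L^2$-solution'' in terms of the dimension of a measurably varying subspace of $L^2$ cut out by the twisted transfer operator, or more simply by noting that $B(\lambda,\pi)$ is contained in the measurable set $\bigcap_k \{g:\textbf{dist}(g_m,S)\to 0\}$, whose measure was already shown to be zero, so the measure-theoretic statement of Theorem~3.4 follows from completeness of the product measure.
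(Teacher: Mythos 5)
Your proposal is correct and follows essentially the same route as the paper: the adapted Veech criterion produces the approximately invariant vectors $w_m$, Lemma~3.3 forces $\textbf{dist}(g_m,S)\to 0$, and then the measure-preservation of the Rauzy maps on $G^n$ together with the ``eventually stay in $A$'' lemma bounds the fiberwise bad set by $\nu(S_p)$ for every $p$, which tends to $\nu(S)=0$ by compactness of $S$. Your explicit Fubini step and the remark on measurability of $B(\lambda,\pi)$ are details the paper leaves implicit, but the argument is the same.
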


\begin{proof} Assume that for some triple $(\lambda,\pi,g)\in{\Delta}_{n-1}\times S^0_n\times G^n$ there exists a nonzero solution $F(x)$ to the equation (8). Then by Theorem 3.1. there exists a sequence of vectors $w_m\in{\mathbb{C}}^d, ||w_m||=1$, such that
 $||\Theta(g^k_m)w_m-w_m||\rightarrow 0$, for $1\leq k\leq n$. Lemma 3.3 implies that $\textbf{dist}(g_m,S)\rightarrow 0$.
 \
 
 It is enough then to prove that for any sequence ${\Gamma}_m:G^n\rightarrow G^n$ consisting of Rauzy maps $A$ and $B$, the set
 $E=\{g\in G^n| \textbf{dist}({\Gamma}_mg, S)\rightarrow 0\}$ has measure zero. Choose a positive integer $p$ and consider a set 
 $S_p=\{g\in G^n| \textbf{dist}(g, S)<1/p\}$. Then clearly the set $E$ is eventually in $S_p$ under the sequence ${\Gamma}_m$. So by Lemma for any $p$, $\nu(E)\leq\nu(S_p)$. As set $S$ is compact it implies that $S=\bigcap S_p$. As $S_p$ is a monotone sequence of sets, $\nu(S_p)\rightarrow\nu(S)=0$ and so $\nu(E)=0$.
\end{proof}

\section{Adapted Avila-Forni argument for one-dimensional representations}

In order to apply Keynes-Newton criterion to one-dimensional representations of $G$ we will need the following useful theorem by Avila and Forni$[1]$.

\begin{Theorem}$\{$Hausdorff dimension of exceptional set$\}$
\

 For a full measure set of IETs $(\lambda, \pi)\in {\Delta}_{n-1}\times S_n^0$, $n\geq 3$ there is a set $W=W(\lambda, \pi)\subseteq{\mathbb{R}}^n$ of Hausdorff dimension at most $g(\pi)$ such that for all vectors 
$h=(h_1,...,h_n), h\in{\mathbb{R}}^n\backslash W$ the equation
\begin{equation}
F(Tx)=\phi(x)F(x)
\end{equation}
\

has a only a trivial solution $f(x)=0\in L^2([0,1],\mathbb{C})$. 
\end{Theorem}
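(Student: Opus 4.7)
The statement is essentially the Avila--Forni theorem on the Hausdorff dimension of the exceptional set of $L^2$-coboundaries, now reformulated in the notation of the present paper. My plan therefore follows the Avila--Forni strategy, and proceeds in three stages.

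\textbf{Stage 1: Reduction to a dynamical condition on $h$.} I would first translate the existence of a nontrivial $L^2$ solution to (9) into a quantitative statement about the orbit of $h$ under the dual Rauzy--Veech cocycle. The argument parallels the proof of Theorem 3.1 verbatim: taking a simple function $\phi$ of modulus one whose phases are encoded by $h$, and exploiting Veech properties $P_1(\epsilon,m)$ and $P_2(\epsilon,m)$ on a good set $E\subseteq\mathbb{Z}_+$, one shows that along $m\in E$ the iterates $h_m$ must become arbitrarily close (modulo $\mathbb{Z}^n$) to vectors fixed by the $1$-dimensional representation, i.e.\ $\|h_m\|_{\mathbb{R}^n/\mathbb{Z}^n}\to 0$. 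The key difference with the higher-dimensional case of Theorem 3.2 is that here the ``target set'' $S$ is $\mathbb{Z}^n$ itself, which has positive codimension but is not measure-zero in the usual sense, so a direct application of Lemma 3.2 is unavailable and a genuine Hausdorff-dimension argument is needed.

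\textbf{Stage 2: Invoke Forni's non-uniform hyperbolicity.} The next step is to bring in Forni's theorem that the Kontsevich--Zorich cocycle on the absolute cohomology (equivalently, the Zorich-accelerated Rauzy--Veech cocycle restricted to the appropriate subspace) is non-uniformly hyperbolic for typical $(\lambda,\pi)\in\Delta_{n-1}\times S_n^0$, with exactly $g(\pi)$ positive Lyapunov exponents. Oseledets' theorem then provides an invariant splitting with stable subspace $E^s(\lambda,\pi)$ of dimension $g(\pi)$. The condition $\|h_m\|_{\mathbb{R}^n/\mathbb{Z}^n}\to 0$ forces $h$, after subtracting a suitable integer vector at each stage, to lie close to the weak-stable manifold of the cocycle.

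\textbf{Stage 3: Covering argument for Hausdorff dimension.} Finally I would perform a scale-by-scale covering estimate. For each $m\in E$, the set of $h$ for which the equation admits a nontrivial solution is covered by a union of integer translates of a thin neighborhood of an affine subspace of dimension $g(\pi)$, whose transverse thickness decays exponentially in $m$ at a rate controlled by the smallest positive Lyapunov exponent. The number of translates grows only polynomially (controlled by $P_2$-type distortion bounds for the Rauzy--Veech matrices), so summing the $s$-dimensional Hausdorff contents of these covers over $m$ yields a finite bound whenever $s>g(\pi)$, giving $\dim_H W\le g(\pi)$.

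\textbf{Main obstacle.} The technical heart is Stage 3: getting a clean polynomial bound on the number of integer translates, uniformly in the scale, requires quantitative distortion estimates on products of Rauzy--Veech matrices along excursions between visits to a compact set of good $(\lambda,\pi)$. This is exactly the step where Avila--Forni combine Forni's hyperbolicity with sharp integrability of the Rauzy--Veech return time, and it is the only place where the precise value $g(\pi)$ (rather than some cruder bound like $n-1$) enters. Reproducing this estimate is the real content of the argument; everything before it is bookkeeping.
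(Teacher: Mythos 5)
This statement is not proved in the paper at all: Theorem 4.1 is imported verbatim from Avila--Forni [1] (the acknowledgements even thank Forni for explaining ``the upper bounds on the Hausdorff dimension and the structure of the exceptional set''), so there is no internal argument to compare yours against. Your three-stage outline is a fair pr\'ecis of the Avila--Forni strategy --- reduction via the Veech criterion to the condition $\|B_mh\|_{\mathbb{R}^n/\mathbb{Z}^n}\to 0$ on the weak-stable lamination of the Rauzy--Veech/Zorich cocycle, Forni's non-uniform hyperbolicity supplying the dimension count tied to $g(\pi)$, and a scale-by-scale covering estimate --- and you correctly locate the genus bound and the difficulty. Within the economy of this paper, the correct ``proof'' is the citation; reproducing [1] is a separate (and substantial) undertaking.

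That said, Stage 3 as you literally describe it would fail. The assertion that ``the number of integer translates grows only polynomially, controlled by $P_2$-type distortion bounds'' is not right: the candidate translates at stage $m$ are the vectors $B_m^{-1}c$, $c\in\mathbb{Z}^n$, lying in a fixed bounded region, and since $\det B_m$ and $\|B_m\|$ grow exponentially along the induction, a naive lattice-point count is exponential in $m$, which destroys any summable Hausdorff-content estimate. What Avila--Forni actually do is an inductive \emph{elimination} scheme: using the hyperbolicity of the restriction of the cocycle to the symplectic subspace $H(\pi)$ together with quantitative recurrence to a compact set of good $(\lambda,\pi)$, they show that only a bounded number of candidate integer vectors can survive from one good return time to the next, so the surviving candidates form a tree of bounded branching; the dimension bound comes from the contraction rate along the branches of that tree, not from counting all translates at a fixed scale. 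This elimination argument is the genuine content of [1] and cannot be replaced by a distortion estimate. If you intend to present a proof rather than a citation, that is the step you must supply; otherwise, state the theorem as quoted from [1], as the paper does, and note that your Stage 1 reduction is anyway subsumed by the Veech-criterion argument already carried out in Theorem 3.1 of this paper.
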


In Theorem 4.1 $g(\pi)$ is a genus of compact surface which one can construct, using IET $(\lambda, \pi)$, and the property of interest to us is that $n\geq 2g(\pi)$ for $n\geq 2$.
\begin{Theorem}
Let $n\geq 3$ and $a_1,...,a_n\in\mathbb{C}:|a_k|=1,1\leq k\leq n$. Let function $\phi:[0, 1]\rightarrow\mathbb{C}$ be defined by
$\phi(x)=a_k$ if $x\in I_k$, for $1\leq k\leq n$.Then for almost all IETs $(\lambda, \pi)\in{\Delta}_{n-1}\times S_n^0$, and almost all $a_1,...,a_n$ and under condition $|F(x)|=1$, the equation

\begin{equation}
F(Tx)=\alpha \phi(x)F(x)
\end{equation}

has only trivial solutions $\alpha=1$, and $F(x)=$ constant 
\end{Theorem}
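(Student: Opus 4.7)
The plan is to deduce Theorem 4.2 from the Avila--Forni result (Theorem 4.1) by absorbing the spectral multiplier $\alpha$ into the cocycle and then applying a Hausdorff--dimension projection argument.

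First, since $|F(Tx)| = |\alpha\phi(x)|\,|F(x)| = |F(x)|$, any nonzero $L^2$ solution has $T$-invariant modulus, and by ergodicity of $T$ one may normalize to $|F|=1$; this shows that the hypothesis $|F(x)|=1$ loses no generality. Write $a_k = e^{2\pi i h_k}$ for $h = (h_1,\ldots,h_n) \in \mathbb{R}^n$, and $\alpha = e^{2\pi i \theta}$. Set $\tilde\phi(x) := \alpha\phi(x)$; this is again a simple circle--valued cocycle, but with parameter vector $\tilde h := h + \theta e$, where $e = (1,1,\ldots,1)$. The equation $F(Tx) = \alpha\phi(x) F(x)$ is thus exactly the Avila--Forni cohomological equation for the parameter $\tilde h$, and as $\alpha$ ranges over the unit circle, $\tilde h$ sweeps out the affine line $L(h) := h + \mathbb{R}e \subseteq \mathbb{R}^n$.

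By Theorem 4.1, for a full measure set of IETs $(\lambda,\pi)$ there is an exceptional set $W = W(\lambda,\pi)\subseteq \mathbb{R}^n$ with Hausdorff dimension at most $g(\pi)$ such that the cohomological equation admits only the trivial solution whenever $\tilde h \notin W$. So it suffices to show that for Lebesgue--a.e.\ $h$, the whole line $L(h)$ is disjoint from $W$. Let $P: \mathbb{R}^n \to \mathbb{R}^n/\mathbb{R}e \cong \mathbb{R}^{n-1}$ be the projection along $e$. Projections do not increase Hausdorff dimension, so $\dim_H P(W) \leq g(\pi)$. The hypothesis $n \geq 3$ combined with $n \geq 2g(\pi)$ gives $g(\pi) \leq n/2 < n-1$, so $P(W)$ has $(n-1)$--dimensional Lebesgue measure zero. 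Hence for almost every $h \in \mathbb{R}^n$ (equivalently, almost every $(a_1,\ldots,a_n)$ on the torus) one has $P(h) \notin P(W)$, which forces $L(h) \cap W = \emptyset$. For such $h$, Theorem 4.1 rules out nontrivial solutions for every $\alpha$ on the unit circle, which is the conclusion of Theorem 4.2.

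The substantive difficulty has already been absorbed into the prior work: the Avila--Forni construction of the exceptional set $W$ with the sharp dimension bound $\dim_H W \leq g(\pi)$. Once that is granted, the remaining argument is elementary dimension counting. The restriction $n \geq 3$ is forced exactly at the step $g(\pi) < n-1$: for $n=2$ the projection $P(W)$ could have positive measure and the argument collapses. A minor technical point is the measurability of $P(W)$, which follows from the explicit description of $W$ as a countable union of semi-algebraic pieces in \cite{1}; and the choice of projection direction $e$ is forced by the way the scalar $\alpha$ enters, so no flexibility is available there.
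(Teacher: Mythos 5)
Your proof is correct and takes essentially the same approach as the paper: where you project $W$ along $e=(1,\dots,1)$ and use $\dim_H P(W)\le g(\pi)<n-1$, the paper instead forms the saturation $\overline{W}=W+\mathbb{R}(1,\dots,1)$ and uses $\dim_H \overline{W}\le g(\pi)+1<n$ — the identical dimension count, since $\overline{W}=P^{-1}(P(W))$.
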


\begin{proof}
If $\phi:[0, 1]\rightarrow\mathbb{C}$ is defined by $\phi(x)=a_k=e^{2\pi i h_k}$, $h_k\in\mathbb{R}$, then the function $\alpha\phi(x)$ is defined by $\alpha\phi(x)=e^{2\pi i(h_k+t)}$, for some number $t\in\mathbb{R}$, such that $\alpha=e^{2\pi i t}$.
\

Let us define the set $\overline{W}=\{W+\mathbb{R}(1,...,1)\}=\{x\in {\mathbb{R}}^n| x=h+t(1,...,1)$, for some $h\in W$ and $t\in\mathbb{R}\}$.
As the Hausdorff dimension of $W$ is bounded by $g(\pi)$ then the Hausdorff dimension of $\overline{W}$ is bounded by $g(\pi)+1$ and so less than $n$. That implies that the Lebesgue measure of $\overline{W}$ is zero and the proof is complete.

\end{proof}
\

We are now prepared to prove the main theorem of this chapter.
\
\begin{Theorem}

Let $\Theta: G\rightarrow U(1)$ be a non-trivial representation of $G$. Then for almost all triples $(\lambda,\pi,g)\in{\Delta}_{n-1}\times S^0_n\times G^n$ the following is true. For \textbf{all} $\alpha\in\mathbb{C}, |\alpha|=1$ the equation:

\begin{equation}
f(Tx)=\alpha\Theta(\phi(x))f   (x)
\end{equation}

\

has only a trivial solution $f(x)=0\in L^2([0, 1], \mathbb{C})$

\end{Theorem}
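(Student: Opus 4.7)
The plan is to reduce Theorem 4.3 to Theorem 4.2 via a pushforward argument on Haar measures, combined with a standard ergodicity step that normalizes general $L^2$ eigenfunctions to unimodular ones.

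First I would analyze the structure of the character $\Theta:G\to U(1)$. Since $G$ is compact and connected, $\Theta(G)$ is a compact connected subgroup of $U(1)$, and the only such subgroups are $\{1\}$ and $U(1)$ itself. Non-triviality of $\Theta$ therefore forces surjectivity. A translation-invariance computation then shows that $\Theta_{*}\nu$ is the Haar measure on $U(1)$, so the product map $\Theta^{\otimes n}:G^n\to U(1)^n$ sends $\nu^n$ to the Haar measure on $U(1)^n$. Consequently the $\Theta^{\otimes n}$-preimage of any full measure subset of $U(1)^n$ is of full Haar measure in $G^n$.

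Second I would invoke Theorem 4.2. Combined with Fubini, it supplies a full measure set $U\subseteq \Delta_{n-1}\times S_n^0\times U(1)^n$ such that for every $(\lambda,\pi,a)\in U$ and every $\alpha\in U(1)$, the equation $F(Tx)=\alpha\psi(x)F(x)$ with $\psi(x)=a_k$ on $I_k$ has no nonzero solution $F\in L^2([0,1],\mathbb{C})$ subject to $|F|\equiv 1$; the only possibility left open by Theorem 4.2 is $\alpha=1$ with $F$ constant, which in turn forces all $a_k=1$, a measure-zero condition in $U(1)^n$. Pulling back along $\mathrm{id}\times\Theta^{\otimes n}$, the set $\widetilde{U}\subseteq \Delta_{n-1}\times S_n^0\times G^n$ consisting of triples $(\lambda,\pi,g)$ for which $(\Theta(g^1),\ldots,\Theta(g^n))$ lies in the corresponding $U$-fiber has full measure by the first step.

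Third I would normalize. Fix $(\lambda,\pi,g)\in\widetilde{U}$ with $T=(\lambda,\pi)$ additionally lying in the Avila-Forni full measure set of weakly mixing (hence ergodic) IETs, and suppose $\alpha\in U(1)$ and nonzero $f\in L^2([0,1],\mathbb{C})$ satisfy $f(Tx)=\alpha\Theta(\phi(x))f(x)$. Since $|\alpha\Theta(\phi(x))|=1$, the function $|f|$ is $T$-invariant; ergodicity of $T$ then forces $|f|$ to be a.e. a positive constant. Setting $F:=f/|f|$ yields a unimodular solution of the same equation with the same $\alpha$, contradicting membership in $\widetilde{U}$. The main obstacle is the bookkeeping in the second step: one must carefully combine Theorem 4.2 with Fubini so that the full measure set in $U(1)^n$ is compatible, via the $\Theta^{\otimes n}$-pullback, with the slicing of $\Delta_{n-1}\times S_n^0\times G^n$, and verify that $\Theta_{*}^{\otimes n}\nu^n$ is genuinely Haar so the pullback preserves full measure; everything else is routine once this push-pull identification is in place.
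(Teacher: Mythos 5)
Your proposal is correct and follows essentially the same route as the paper: reduce to Theorem 4.2 by pulling back its full-measure set in $U(1)^n$ along the map $g\mapsto(\Theta(g^1),\dots,\Theta(g^n))$, using that this preimage has full Haar measure. You additionally supply two details the paper's one-line proof leaves implicit --- the normalization of a general $L^2$ eigenfunction to a unimodular one via ergodicity of $T$ (needed because Theorem 4.2 is stated under the condition $|F|\equiv 1$), and the justification that $\Theta$ is surjective so the pushforward of Haar is Haar --- both of which strengthen rather than alter the argument.
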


\begin{proof}
Given a triple $(\lambda,\pi,g)\in{\Delta}_{n-1}\times S^0_n\times G^n$ define a function $\Xi:[0,1]\rightarrow U(1)$ as $\Xi(x)=\Theta(\phi(x))$. By Theorem 4.2 there is a full measure set $P\in U(1)\times...\times U(1)$ such that for any $\alpha$ the equation
\begin{equation}
f(Tx)=\alpha\Xi(x)f(x)
\end{equation}

has only a trivial solution $f(x)=0$. The projection map $PR: G^n\rightarrow {[U(1)]}^n$ is a locally trivial fiber bundle, and so ${PR}^{-1}(P)$ has a full measure. The proof is complete.
\end{proof}
\

Now Theorem 1.1 immediately follows from Theorems 2.1, 3.4 and 4.3.
\section{Acknowledgements}
We would like to thank Pablo Carrasco for several fruitful and enlightening conversations during the work on this paper.
\

We would like to thank Giovanni Forni for explaining to us the delicate aspects of his work $[1]$ with Avila regarding the upper bounds on the  Hausdorff dimension and the structure of the exceptional set.
\

We would like to thank Andrey Gogolev for several inspiring and enlightening discussions during the work on this paper.

\newpage

\end{document}